\font\emailfont=cmtt10
\newcommand\commentable[1]{#1}
\newtheorem{thm}{Theorem}
\newtheorem{conjecture}{Conjecture}
\newtheorem{theorem}{Theorem}[section]
\newtheorem{prop}[theorem]{Proposition}
\newtheorem{defn}[theorem]{Definition}
\newtheorem{remark}[theorem]{Remark}
\def\endproofof{\relax\ifmmode\expandafter\endproofmath\else
  \unskip\nobreak\hfil\penalty50\hskip.75em\hbox{}\nobreak\hfil\bull
  {\parfillskip=0pt \finalhyphendemerits=0 \bigbreak}\fi}
\def\endproofofmath$${\eqno\bull$$\bigbreak}
\def\endproof{\relax\ifmmode\expandafter\endproofmath\else
  \unskip\nobreak\hfil\penalty50\hskip.75em\hbox{}\nobreak\hfil\bull
  {\parfillskip=0pt \finalhyphendemerits=0 \bigbreak}\fi}
\def\endproofmath$${\eqno\bull$$\bigbreak}
\def\bull{\vbox{\hrule\hbox{\vrule\kern3pt\vbox{\kern6pt}\kern3pt\vrule}\hrule}}
\newcommand{\Q}{\mathbb{Q}}
\newcommand{\Z}{\mathbb{Z}}
\newcommand\SpinC{\mathrm{Spin}^c}
\newcommand{\F}{\mathbb F}
\newcommand\relspinc{\underline{\spinc}}
\newcommand\Filt{\mathcal F}
\newcommand\ModSphere{\ModFlow\left({\mathbb S}\longrightarrow 
\Sym^{g-1}(\Sigma_{1})\times \Sym^2(\Sigma_{2})\right)}
\newcommand\ModSpheres\ModSphere
\newcommand\HFa{\widehat{HF}}
\newcommand\UnparModSp{\widehat \ModSp}
\newcommand\UnparModFlow\UnparModSp
\newcommand\Mod\ModSp
\newcommand{\spinc}{\mathfrak s}
\newcommand{\spinct}{\mathfrak t}
\newcommand\ModMaps{\mathcal M}
\newcommand\ModSp\ModMaps
\newcommand\spincrel\relspinc
\newcommand\Dual{\mathcal D}
\newcommand\Duality\Dual
\newcommand\ons{Ozsv{\'a}th and Szab{\'o}}
\newcommand\os{{Ozsv{\'a}th-Szab{\'o}}}
\newcommand\SSurf{S}
\newcommand\Sym{\mathrm{Sym}}
\newcommand{\cob}{F\setminus {S}}
\newcommand{\cobprime}{\kappa({F\setminus {S}})}
\title[{4-dimensional aspects of tight contact  3-manifolds}] 
{4-dimensional aspects of tight contact  3-manifolds}
\author[Matthew Hedden]{Matthew Hedden}
\address{Department of
Mathematics, Michigan State University, MI \newline
\indent{\emailfont{mhedden@math.msu.edu}}}
\author{Katherine Raoux}
\address{Department of
Mathematics, Michigan State University, MI \newline
\indent{\emailfont{raouxkat@msu.edu}}}
\thanks{MH gratefully acknowledges support from NSF grant DMS-1709016.  KR was partially supported by an AWM Mentoring Travel Grant}
\begin{document}

\begin{abstract}  In this article we conjecture a 4-dimensional characterization of tightness: a contact structure on a 3-manifold $Y$ is tight if and only if a slice-Bennequin inequality holds for smoothly embedded surfaces  in $Y\times [0,1]$.  An affirmative answer to our conjecture would imply an analogue of the Milnor conjecture for torus knots: if a fibered link $L$ induces a tight contact structure on $Y$ then its fiber surface  maximize Euler characteristic amongst all surfaces in $Y\times [0,1]$ with boundary $L$.  We provide evidence for both conjectures by proving them for contact structures with non-vanishing \os \ contact invariant.     \end{abstract}

\maketitle
\section{Introduction}
Contact structures on 3-manifolds are coarsely classified according to the famous {\em tight} versus {\em overtwisted} dichotomy.  The latter class is characterized by the presence of an overtwisted disk; that is, an embedded disk whose boundary is tangent to the contact structure in such a way that the framing provided by the contact structure agrees with that given by the disk.   This contact framing, viewed as an integer by way of the linking number, is the well-known Thurston-Bennequin number. Thus a contact structure being overtwisted is equivalent to possessing a Legendrian unknot with Thurston-Bennequin number zero. Using Eliashberg's classification theorem for overtwisted contact structures \cite{Eliashberg}, one can  easily see that any link type in an overtwisted contact structure admits Legendrian representatives with Thurston-Bennequin invariant any integer, see e.g. \cite[Proposition 22]{tbbounds}.  By stabilizing, however, one can also show that in {\em any} contact structure, the Thurston-Bennequin invariant can be arranged to be arbitrarily negative.  From these considerations, it follows that tightness is characterized by the existence of an upper bound on the Thurston-Bennequin numbers of Legendrian representatives of any single link type (say, the unknot).  Eliashberg and Bennequin showed that, in fact, establishing such an upper bound for a single link type implies a bound for all types in terms of the Euler characteristic of surfaces bounded by the link:
\vskip0.1in
\noindent {\bf Eliashberg-Bennequin inequality} \cite{EliashbergTB,Bennequin}.{\em \; $(Y,\xi)$ is tight if and only if
   \[ tb_\xi(\partial \Sigma)+ |rot_{[\Sigma]}(\partial \Sigma)| \le -\chi({\Sigma})\]
 \noindent  for any smoothly embedded surface  $\Sigma \hookrightarrow Y$ with Legendrian boundary.}
  \vskip0.1in
\noindent In the above, and throughout, we implicitly assume that contact structures are oriented, and that  3-manifolds are compatibly oriented.  With that in mind,  $rot_{[\Sigma]}(\partial \Sigma)$ is the other ``classical" invariant of a Legendrian knot, the {\em rotation number}, defined as the relative Euler number of the contact structure restricted to the surface $\Sigma$.  Using Kronheimer and Mrowka's affirmative answer to Milnor's question about the unknotting number of torus knots \cite{KM1}, Rudolph showed that for Legendrian links in the standard contact structure on the $3$-sphere, the Eliashberg-Bennequin bound can be strengthened to the so-called ``slice-Bennequin"  inequality, for which the bound is given by $-\chi(\Sigma)$ with $\Sigma$ a smooth and properly embedded surface in the $4$-ball.  Note that while Rudolph used Kronheimer-Mrowka's theorem to prove the slice-Bennequin inequality, the slice-Bennequin inequality itself implies the Milnor conjecture.  This indicates that the Legendrian knot theory of the tight contact $3$-sphere is closely related to smooth 4-dimensional topology.   The purpose of this article is to conjecture that tightness is, in general, a 4-dimensional phenomenon:
\begin{conjecture}[\bf General slice-Bennequin inequality]\label{conj1} $(Y,\xi)$ is tight if and only if 
\[  tb_\xi(\partial \Sigma)+ |rot_{[\Sigma]}(\partial \Sigma)| \le -\chi({\Sigma})\]
for any smoothly embedded surface  $\Sigma \hookrightarrow Y\times [0,1]$ with Legendrian boundary in $Y\times\{1\}$.
\end{conjecture}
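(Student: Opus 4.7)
The plan is to establish the conjecture under the additional hypothesis that the \os\ contact invariant $c(\xi) \in \HFa(-Y,\spinc(\xi))$ is nonzero. The reverse implication is straightforward in general: an overtwisted contact structure contains an overtwisted disk $D$, producing a Legendrian unknot $K = \partial D$ with $tb_\xi(K)=0$ and $rot_{[D]}(K)=0$. Pushing $D$ slightly into $Y \times [0,1]$ yields a smoothly embedded disk $\Sigma$ with $-\chi(\Sigma) = -1 < 0 = tb_\xi(\partial\Sigma) + |rot_{[\Sigma]}(\partial\Sigma)|$, violating the inequality. Thus the substantive content lies in the forward direction.

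My approach mirrors the way the \os\ $\tau$-invariant controls the slice-Bennequin inequality for knots in $S^3$. The strategy is to define a contact-refined invariant $\tau_\xi(L,[\Sigma])$ for Legendrian links $L$ in $(Y,\xi)$ equipped with a relative homology class $[\Sigma]$, and to establish two bounds:
\begin{enumerate}
\item $tb_\xi(L) + |rot_{[\Sigma]}(L)| \leq 2\tau_\xi(L,[\Sigma]) - \chi(\Sigma) - 1$,
\item $\tau_\xi(L,[F]) \leq \tfrac{1 - \chi(F)}{2}$ for any smoothly, properly embedded surface $F \hookrightarrow Y\times[0,1]$ with $\partial F = L \subset Y\times\{1\}$ representing the given relative class.
\end{enumerate}
Chaining these immediately yields $tb_\xi(L) + |rot_{[\Sigma]}(L)| \leq -\chi(F)$ as desired.

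The invariant $\tau_\xi(L,[\Sigma])$ would be defined as the minimal level of the link Floer filtration on $\CFa(-Y)$ at which a cycle representative of the contact class $c(\xi)$ can be supported. The Legendrian inequality (1) should be proved by building an explicit cycle representative of $c(\xi)$ from an open book, or from a contact handle decomposition adapted to $L$, then computing its filtration level directly in terms of $tb_\xi(L)$ and $rot_{[\Sigma]}(L)$; this parallels Plamenevskaya's and Hedden's analyses of the contact class under stabilization, extended to the multi-component setting.

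The main obstacle, and heart of the argument, is the 4-dimensional inequality (2). The pair $(Y\times[0,1],F)$ is viewed as a cobordism of pairs from $(Y,\emptyset)$ to $(Y,L)$, inducing a filtered map on $\HFa(-Y)$ whose filtration shift is controlled by $\chi(F)$. The critical technical point is that nonvanishing of $c(\xi)$ must force a cycle representative of $c(\xi)$ to survive in filtration level at most $(1-\chi(F))/2$; this is precisely where the hypothesis $c(\xi)\neq 0$ enters decisively, as it obstructs the cancellations in $\HFa$ that would otherwise allow $\tau_\xi$ to drop below the slice bound. Carrying out this filtration control in the general 3-manifold setting---as opposed to $S^3$, where absolute gradings and the Alexander filtration afford simpler bookkeeping---is where the bulk of the technical work must lie.
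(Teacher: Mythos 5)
Your strategy is the same one the paper uses to prove the only case of this conjecture that is currently known (Theorem \ref{sB}, the case $c(\xi)\neq 0$): combine a Bennequin-type bound $tb+|rot|\le 2\tau_\xi-1$ for a contact-refined $\tau$-invariant with a 4-dimensional genus bound on that same invariant coming from viewing $Y\times[0,1]$ as a cobordism, then chain the two. The paper, however, does not prove either ingredient from scratch: the Legendrian bound is quoted from \cite{tbbounds} and the 4-dimensional bound is the relative adjunction inequality of \cite{adjunction}, applied to $Y\times[0,1]$ turned around (so that the cobordism map fixes $c(\xi)\in\HFa(-Y)$) and to the surface tubed to an unknot in the outgoing end. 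Your step (2) is exactly this relative adjunction inequality; it is a substantial theorem in its own right, and ``filtration control forced by nonvanishing of $c(\xi)$'' does not constitute a proof of it. Relatedly, the paper has to spend real effort reconciling the fact that $c(\xi)$ lives in $\HFa(-Y)$ while the Legendrian data live in $Y$ (Proposition \ref{Prop:reverseorient}, $\tau_{c(\xi)}(-Y,K)=-\tau_\xi(Y,K)$); your definition of $\tau_\xi$ directly on $\CFa(-Y)$ silently absorbs this, but then your inequality (1) must be proved for that orientation-reversed invariant, which is precisely the ``wrinkle'' the paper irons out explicitly.

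Two further concrete issues. First, your inequalities (1) and (2) do not chain to the claimed conclusion as written: they give $tb+|rot|\le -\chi(F)-\chi(\Sigma)$, not $-\chi(F)$; the $-\chi(\Sigma)$ term in (1) should not be there (for a knot the correct bound is $tb+|rot|\le 2\tau_\xi-1$, and for an $n$-component link the correction is $n$, not an Euler characteristic of a homology class, which is not even well defined). Second, for multi-component boundary you propose to work directly with a link-Floer-filtered $\tau$, whereas the paper reduces to the knot case via \ons's knotification, carefully tracking how $tb$, $rot$, $\chi$, and the contact class behave under attaching Stein 1-handles. Your route is viable in principle but requires establishing both the Legendrian bound and the adjunction inequality for the multi-filtered link invariant over a general $Y$, which is at least as much work as the knotification argument and is not sketched here. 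The ``only if'' direction via the overtwisted disk is fine and matches the paper's introductory discussion.
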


One could imagine importing some of the tools used in the proof of the $3$-dimensional Eliashberg-Bennequin bound to attack the problem at hand; for instance, one could attempt to manipulate characteristic foliations imprinted on  singular surfaces in $Y$ obtained by projection along the interval. Such attempts, if successful, would give rise to a markedly different proof of the slice-Bennequin bound for links in $S^3$. This would, in turn, yield a far more geometric proof of the Milnor conjecture than any known to date (which rely on gauge theory, Floer homology, or TQFT structures present in link homology theories).

As evidence for the conjecture, we establish it for the  class of contact manifolds whose \os \ contact invariant (see \cite{Contact} for its definition) is non-trivial.  

\begin{thm}\label{sB} Suppose $(Y,\xi)$ is a closed contact $3$-manifold with non-vanishing \os \ contact invariant. Then
\[  tb_\xi(\partial \Sigma)+ |rot_{[\Sigma]}(\partial \Sigma)| \le -\chi({\Sigma})\]
for any smoothly embedded surface  $\Sigma \hookrightarrow Y\times [0,1]$ with Legendrian boundary in $Y\times\{1\}$.
\end{thm}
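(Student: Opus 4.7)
The plan is to prove the inequality by sandwiching a contact-refined $\tau$-invariant between Legendrian and smooth $4$-dimensional data. Let $K \subset Y$ denote the topological knot type underlying $\partial\Sigma$, and assume for simplicity that $K$ is null-homologous (the general rationally null-homologous case follows with appropriate modifications). Using the Alexander filtration on $\HFKa(-Y,K)$ determined by the relative class $[\Sigma]\in H_2(Y,K;\Q)$, define $\tau_\xi([\Sigma],K)$ as the minimal Alexander grading at which the contact class $\contact \in \HFa(-Y)$ is represented by a cycle; non-vanishing of $\contact$ ensures finiteness.

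\textbf{Step 1 (Legendrian lower bound).} For a Legendrian representative $L$ of $K$, I would invoke the Lisca--Ozsv\'ath--Stipsicz--Szab\'o Legendrian invariant $\mathcal{L}(L) \in \HFKa(-Y,L)$. By their work, $\mathcal{L}(L)$ lies in Alexander grading $\tfrac{1}{2}\bigl(tb_\xi(L) + rot_{[\Sigma]}(L) + 1\bigr)$ and projects to $\contact$ under the forgetful map $\HFKa(-Y,L) \to \HFa(-Y)$. This gives
\[
\tau_\xi([\Sigma],K) \ge \tfrac{1}{2}\bigl(tb_\xi(L) + rot_{[\Sigma]}(L) + 1\bigr).
\]
Applying the same bound with the orientation of $L$ reversed negates $rot$ while preserving $tb$, yielding the bound with $|rot_{[\Sigma]}(L)|$.

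\textbf{Step 2 (Smooth 4-dimensional upper bound).} Show that for any smoothly embedded $\Sigma \subset Y \times [0,1]$ with $\partial\Sigma = K \subset Y\times\{1\}$,
\[
\tau_\xi([\Sigma],K) \le \tfrac{1}{2}\bigl(1 - \chi(\Sigma)\bigr).
\]
Choose a generic height function on $\Sigma$ descending from $Y \times \{1\}$; this decomposes $\Sigma$ into elementary cobordisms (births, oriented saddles, deaths). Each elementary piece induces a map on $\HFa(-Y)$ compatible with the Alexander filtration associated to the current link. The key estimate is that each saddle shifts the Alexander grading of $\contact$ by at most $\tfrac{1}{2}$, while births and deaths preserve it. Since the total number of saddles is at least $1-\chi(\Sigma)$ (from $\chi = b - s + d$ with $b\ge 1$ and $d\ge 0$), the desired inequality follows.

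Combining the two bounds gives $tb_\xi(L) + |rot_{[\Sigma]}(L)| + 1 \le 1 - \chi(\Sigma)$, which is the theorem. The hard part will be Step 2: in contrast to the $S^3$ setting, where the Alexander filtration and saddle cobordism maps on $\HFKa$ are well-controlled, here the relative class $[\Sigma]$ determines the Alexander filtration via a class in $H^1(Y \setminus K)$, and one must set up filtered cobordism maps compatible with this choice and extend the standard Sarkar-type bound on saddle moves to this relative $\SpinC$ setting, while tracking $\contact$ through the induced maps.
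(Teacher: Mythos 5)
Your overall architecture---sandwiching a contact-refined $\tau$-invariant between a Legendrian lower bound and a smooth $4$-dimensional upper bound---is exactly the paper's strategy, and your Step 1 is essentially the Bennequin bound $tb(\mathcal K)+|rot(\mathcal K)|\le 2\tau_\xi(Y,K)-1$ already established in \cite{tbbounds} (which can indeed be derived from the LOSS invariant as you describe). The divergence, and the gap, is in Step 2. The paper does not run a movie argument: it invokes the relative adjunction inequality of \cite{adjunction}, applied to $Y\times[0,1]$ viewed (after turning the cobordism around) as a $\SpinC$ cobordism from $-Y$ to $-Y$ whose induced map fixes $c(\xi)$; capping $\Sigma$ with a Seifert surface in class $-[\Sigma]$ kills the Chern class and self-intersection terms, leaving $-2\tau_{c(\xi)}(-Y,K)\le 2g(\Sigma)$, which together with $\tau_{c(\xi)}(-Y,K)=-\tau_\xi(Y,K)$ closes the connected-boundary case. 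Your Step 2 proposes to re-prove this adjunction inequality from scratch, but the key estimate---that each saddle shifts the filtration level of $c(\xi)$ by at most $\tfrac12$---is asserted rather than proved, and it is precisely the hard technical content being black-boxed. Moreover, even when $\partial\Sigma$ is a knot, the intermediate level sets of your height function are multi-component links in a general $Y$, so before the saddle estimate can even be stated you must define the relevant Alexander filtration (and a $\tau_\xi$) for links in $Y$ and construct filtered cobordism maps for band attachments there; none of this is set up in your sketch, and births/deaths of split unknotted components also require an argument in this generality.

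A second, independent gap: the theorem allows $\partial\Sigma$ to be a multi-component Legendrian link, which your proposal silently excludes by taking $K$ to be ``the knot type underlying $\partial\Sigma$.'' The paper devotes half its proof to this case via knotification: one attaches $1$-handles to $Y\times[0,1]$, replaces $L$ by $\kappa(L)\subset Y\#^{|L|-1}S^1\times S^2$, uses the K\"unneth and product formulas to see that $c(\xi\#\xi_{std})\ne 0$ and that the turned-around $1$-handle cobordism map carries it to $c(\xi)$, and then tracks the corrections $tb(\kappa(L))=tb(L)+|L|-1$ and $-\chi(\Sigma')=-\chi(\Sigma)+|L|-1$, which cancel. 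You would need either this device or a genuine link Floer version of both of your steps to obtain the theorem as stated.
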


It is known that the contact manifolds with non-vanishing invariant form a proper subset of  tight contact manifolds, and  the extent to which the two classes differ is an interesting subject  \cite{BaldwinVelaVick, Ghiggini, GHVHM, Kutluhan, Massot}.  We remark that the inequality above has a natural extension to rationally null-homologous knots, which we prove in Theorem \ref{thm:rational} below.   One should also compare our result to a Bennequin bound for surfaces in compact $4$-manifolds with $\partial W=Y$ and non-trivial Seiberg-Witten invariant relative to $\xi$, proved by Mrowka and Rollin \cite{MrowkaRollin}.  The techniques at hand  easily establish an analogue of this latter bound for the \os\ contact invariant, see Theorem \ref{MRbound}.

As mentioned, the slice-Bennequin inequality for knots in $S^3$ immediately implies the Milnor Conjecture on the unknotting number of torus knots.  Indeed, we have the string of inequalities:
\[  tb(T_{p,q})+ rot(T_{p,q}) \le 2g_4(T_{p,q})-1  \le 2u(T_{p,q})-1.\]
On the other hand, it is easy to produce a Legendrian representative for $T_{p,q}$ for which the 3-dimensional Bennequin bound is sharp:
\[ 2g(T_{p,q})-1 = pq-p-q= tb(T_{p,q})+ rot(T_{p,q}). \]
It follows that the 3- and 4-genera of these knots agree, and equal the unknotting number (exhibited by an explicit sequence of $\frac{(p-1)(q-1)}{2}$ crossing changes).

More generally, one can import the above reasoning to show that the fiber surface for any fibered link $L$ whose associated open book decomposition induces the standard contact structure on the 3-sphere minimizes $-\chi(\Sigma)$ amongst all smooth and properly embedded surfaces in the 4-ball with boundary $L$.\footnote{In fact, such knots also share the property with torus knots that their fibers are properly isotopic to a piece of complex algebraic curve in the 4-ball \cite{SQPfiber}.}    Our conjecture has, as a corollary, the following analogue for arbitrary tight structures:

\begin{conjecture}[\bf General Milnor conjecture] \label{conj2} Suppose $L\subset Y$ is a fibered link with fiber $\Sigma$ whose associated open book decomposition induces a tight contact structure $\xi_{L}$.  Then $\Sigma$ maximizes Euler characteristic amongst all  surfaces in  $Y\times [0,1]$ which are smoothly embedded, have boundary $L$, and carry the relative homology class $[\Sigma]$. \end{conjecture}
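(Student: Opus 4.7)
The plan is to deduce Conjecture \ref{conj2} from Conjecture \ref{conj1} in exactly the same way that the classical slice-Bennequin inequality in $S^3$ implies the Milnor conjecture for torus knots. The decisive input is a \emph{Bennequin-sharp} Legendrian representative of $L$ produced from the open book; once this is in hand, the slice-Bennequin inequality immediately bounds the Euler characteristic of every competing surface in $Y\times[0,1]$.

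First I would invoke standard features of compatible open books to produce an oriented Legendrian representative $L_{\mathrm{Leg}}$ of $L$ satisfying
$$tb_{\xi_L}(L_{\mathrm{Leg}}) + |rot_{[\Sigma]}(L_{\mathrm{Leg}})| = -\chi(\Sigma).$$
The binding of the compatible open book is positively transverse to $\xi_L$, compatibility forces the contact framing along the binding to coincide with the page framing, and a short Poincar\'e--Hopf-style calculation, using a Reeb vector field that can be taken tangent to the pages, identifies the self-linking number of the binding relative to $\Sigma$ as $-\chi(\Sigma)$. Taking a Legendrian approximation of the transverse binding and then applying negative Legendrian stabilizations componentwise (which preserve the identity $sl = tb - rot$ while decreasing $rot$) to arrange $rot_{[\Sigma]}(L_{\mathrm{Leg}}) \le 0$ delivers the displayed equality.

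Next I would apply Conjecture \ref{conj1} to an arbitrary competitor. Let $\Sigma'\hookrightarrow Y\times[0,1]$ be a smoothly embedded surface with boundary $L$ and $[\Sigma']=[\Sigma]$. Inside a collar of $Y\times\{1\}$, isotope $\Sigma'$ so that $\partial\Sigma' = L_{\mathrm{Leg}}$; this alters neither $\chi(\Sigma')$ nor the relative homology class. Because the rotation number is by definition a relative Euler class of $\xi_L|_{\Sigma'}$ trivialized by the Legendrian tangent direction along $\partial\Sigma'$, the quantity $rot_{[\Sigma']}(L_{\mathrm{Leg}})$ depends only on $[\Sigma']$ and hence equals $rot_{[\Sigma]}(L_{\mathrm{Leg}})$. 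Conjecture \ref{conj1} then gives
$$-\chi(\Sigma) = tb_{\xi_L}(L_{\mathrm{Leg}}) + |rot_{[\Sigma]}(L_{\mathrm{Leg}})| \le -\chi(\Sigma'),$$
i.e.\ $\chi(\Sigma') \le \chi(\Sigma)$, which is the claim.

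The main obstacle is thus Conjecture \ref{conj1} itself; the corollary above is essentially a one-line deduction once the slice-Bennequin bound is in hand. In particular, Theorem \ref{sB} substitutes for Conjecture \ref{conj1} and promotes the argument to an unconditional proof of Conjecture \ref{conj2} whenever the \os\ contact invariant of $\xi_L$ is nonzero. The step I would verify most carefully in the first paragraph is that a compatible open book really does produce a transverse link with $sl = -\chi(\Sigma)$ in an arbitrary $Y$; this is classical for $Y = S^3$ and extends without essential change to general $Y$ via the Thurston--Winkelnkemper construction of a contact form whose Reeb field is tangent to the pages, but checking that the multicomponent and orientation conventions are consistent is where the real bookkeeping lies.
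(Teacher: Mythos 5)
Your proposal is correct and follows essentially the same route as the paper: the authors also deduce Conjecture \ref{conj2} from Conjecture \ref{conj1} by noting that the binding is a transverse link with $sl_{\xi_L}(L)=-\chi(\Sigma)$, taking a Legendrian approximation realizing $tb+|rot_{[\Sigma]}|=-\chi(\Sigma)$, and applying the slice-Bennequin bound to any competing surface in the class $[\Sigma]$ (with Theorem \ref{sB} giving the unconditional statement when $c(\xi_L)\neq 0$). Your extra bookkeeping on stabilizations and on the dependence of $rot$ on the relative homology class is a sensible filling-in of details the paper leaves implicit.
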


To see that Conjecture \ref{conj1} implies Conjecture \ref{conj2}, it suffices to observe that the boundary of the fiber is naturally a transverse link in the contact structure induced by the  open book decomposition, and the self-linking number of this transverse link is given by negative the Euler characteristic of the fiber: \[ sl_{\xi_{L}}(L)=-\chi(\Sigma).\]  Then we can approximate $L$ by a Legendrian curve $\mathcal{L}$ with  \[ tb_\xi(\mathcal{L})+ rot_{[\Sigma]}(\mathcal{L}) =sl_{\xi_{L}}(L).\]  If Conjecture \ref{conj1} holds it follows that the Euler characteristic of any other slice surface for $L$ in the relative homology class of the fiber will be less than or equal to $\chi(\Sigma)$.

By the same reasoning we see that our main theorem establishes the second conjecture in the case that the contact structure has non-trivial invariant:

\begin{thm}\label{milnor}  Suppose $L\subset Y$ is a fibered link  whose associated contact structure has non-vanishing \os\ invariant.  Then the fiber maximizes Euler characteristic amongst  
 all  surfaces in its relative homology class that are smoothly embedded within $Y\times [0,1]$ and have boundary $L$.
\end{thm}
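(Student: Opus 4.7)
The plan is to derive Theorem \ref{milnor} from Theorem \ref{sB} by exploiting the fact that the self-linking number of the binding of an open book, measured with respect to the page, is precisely $-\chi$ of the page. In this way a Legendrian approximation of the binding achieves equality in the contact Bennequin-type bound, and any competing slice surface of strictly larger Euler characteristic would contradict the general slice-Bennequin inequality.

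First I would record that when $L$ is the binding of an open book with page $\Sigma$ and $\xi_L$ is the compatible contact structure, $L$ is naturally a positively transverse link whose self-linking relative to the fiber class satisfies
\[ sl_{\xi_L}(L,[\Sigma]) = -\chi(\Sigma). \]
This is a classical consequence of the Thurston-Winkelnkemper construction: along the binding the contact framing coincides with the page framing, and the self-linking with respect to a Seifert surface equals the difference between the contact framing and the surface framing, which for the page is $-\chi(\Sigma)$.

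Next I would take a Legendrian approximation $\mathcal{L}$ of $L$ inside $Y\times\{1\}$. By the standard relation between the self-linking of a transverse link and the classical invariants of a Legendrian approximation,
\[ tb_{\xi_L}(\mathcal{L}) + rot_{[\Sigma]}(\mathcal{L}) \;=\; sl_{\xi_L}(L,[\Sigma]) \;=\; -\chi(\Sigma). \]
Because $\mathcal{L}$ is a $C^0$-small perturbation of $L$, any smoothly embedded surface $\Sigma' \hookrightarrow Y\times[0,1]$ with $\partial\Sigma'=L\subset Y\times\{1\}$ and $[\Sigma']=[\Sigma]$ can be modified inside an arbitrarily small collar of $Y\times\{1\}$ to a smoothly embedded surface $\widetilde{\Sigma}'$ having Legendrian boundary $\mathcal{L}$, the same relative homology class $[\Sigma]$, and the same Euler characteristic as $\Sigma'$.

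Since $\xi_L$ has non-vanishing \os\ invariant by hypothesis, Theorem \ref{sB} applies to $\widetilde{\Sigma}'$ and yields
\[ tb_{\xi_L}(\mathcal{L}) + |rot_{[\Sigma]}(\mathcal{L})| \;\le\; -\chi(\widetilde{\Sigma}') \;=\; -\chi(\Sigma'). \]
Dropping the absolute value on the left only weakens the inequality, so combining with the previous equality gives $-\chi(\Sigma)\le -\chi(\Sigma')$, i.e.\ $\chi(\Sigma')\le \chi(\Sigma)$, as claimed. The substantive work is packaged inside Theorem \ref{sB}; the remaining ingredients — the self-linking computation for the binding and the passage from transverse to Legendrian boundary in a collar — are elementary. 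Thus the only real obstacle is Theorem \ref{sB} itself, and granting it, the deduction of Theorem \ref{milnor} is formal, just as the paper's introductory discussion of Conjecture \ref{conj1} implying Conjecture \ref{conj2} suggests.
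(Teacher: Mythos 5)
Your argument is correct and is essentially identical to the paper's: the paper deduces Theorem \ref{milnor} from Theorem \ref{sB} via exactly this route, using $sl_{\xi_L}(L)=-\chi(\Sigma)$ for the binding and a Legendrian approximation $\mathcal{L}$ with $tb_{\xi_L}(\mathcal{L})+rot_{[\Sigma]}(\mathcal{L})=sl_{\xi_L}(L)$, as laid out in the introduction's discussion of Conjecture \ref{conj1} implying Conjecture \ref{conj2}. Your additional remark about adjusting a competing surface in a collar so that its boundary becomes the Legendrian approximation is a correct and slightly more explicit version of what the paper leaves implicit.
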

 
Finally, we show that subsurfaces of open books inducing contact structures with non-trivial contact invariant are slice-genus minimizing for their boundary:
 
 \begin{thm}\label{subsurface}  Suppose  a link $L\subset Y$ embeds as a separating curve on the page of an open book inducing a contact structure  with non-vanishing \os\ invariant.  Then the subsurface  of the page bounded by $L$ maximizes Euler characteristic amongst  
 all surfaces in its relative homology class that are smoothly embedded in $Y\times [0,1]$ with boundary $L$.
\end{thm}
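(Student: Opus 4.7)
The plan is to adapt the proof of Theorem \ref{milnor}, replacing the binding by our page-separating curve $L$ and the fiber by the subsurface $\Sigma_0 \subset Y \times \{1\}$ it bounds on the page. The essential input is a self-linking identity: because $L$ sits on the page of an open book adapted to $\xi$, it is naturally transverse to $\xi$, and a direct computation of the relative Chern class $e(\xi|_{\Sigma_0}, v)$ with $v$ the transverse framing along $L$ derived from the page tangent data yields
$$sl_\xi(L) = -\chi(\Sigma_0).$$
This is the direct analogue of the familiar formula for the full binding, and follows because the Reeb field of an adapted open book is transverse to the interior of the page, constraining the characteristic foliation on $\Sigma_0$ in the same way as on the full fiber.

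Take a Legendrian approximation $\mathcal{L}$ of $L$ in $Y \times \{1\}$, orienting $L$ so that $rot_{[\Sigma_0]}(\mathcal{L}) \le 0$; by the standard Bennequin relation $tb_\xi(\mathcal{L}) - rot_{[\Sigma_0]}(\mathcal{L}) = sl_\xi(L)$, we then have $tb_\xi(\mathcal{L}) + |rot_{[\Sigma_0]}(\mathcal{L})| = -\chi(\Sigma_0)$. Given any smoothly embedded $\Sigma' \hookrightarrow Y \times [0,1]$ with $\partial \Sigma' = L$ and $[\Sigma'] = [\Sigma_0]$, attach annuli inside a collar $Y \times [1-\epsilon, 1]$ to convert the transverse boundary $L$ into the Legendrian $\mathcal{L}$; this changes neither $\chi(\Sigma')$ nor the relative homology class. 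Applying Theorem \ref{sB} gives
$$tb_\xi(\mathcal{L}) + |rot_{[\Sigma']}(\mathcal{L})| \le -\chi(\Sigma').$$
Because the rotation-number term depends only on the relative homology class of the bounding surface, and $[\Sigma'] = [\Sigma_0]$, the two rotation numbers agree, and combining the displayed inequalities yields $-\chi(\Sigma_0) \le -\chi(\Sigma')$, i.e., $\chi(\Sigma') \le \chi(\Sigma_0)$.

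The main content to verify is the self-linking identity for an arbitrary separating page-curve, together with the homology-class independence of the rotation-number contribution; both should follow from routine relative Chern class manipulations once signs and framing conventions are pinned down. Once these are in place, the remainder of the proof is a direct transplantation of the strategy used for Theorem \ref{milnor}, with the subsurface $\Sigma_0$ playing the role that the full fiber played there.
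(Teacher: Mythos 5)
Your route is genuinely different from the one the paper actually takes, and in fact it is precisely the alternative the authors flag in the remark immediately following their proof: perturb $\partial S$ to a transverse link realizing the sharp Bennequin equality $sl_{[\Sigma_0]}(L)=-\chi(\Sigma_0)$, then feed a Legendrian approximation into Theorem \ref{sB}. The paper instead avoids this by a cobordism argument: it embeds $F\setminus S$ in $Y\times[0,1]$ as a cobordism from $L$ to the binding $\partial F$, uses the known equality $\tau_\xi(\partial F)=g(F)$ for bindings, and applies the relative adjunction inequality twice to sandwich $\tau_\xi(L)$ between $g(S)$ and the genus of any competing surface (with a surgery/knotification step for multi-component $L$). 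The downstream part of your argument is fine: the reduction of the competitor's boundary from $L$ to $\mathcal{L}$ by a small isotopy, the observation that $rot$ depends only on the relative class so $[\Sigma']=[\Sigma_0]$ forces the rotation numbers to agree, and the chain $-\chi(\Sigma_0)=tb(\mathcal{L})-rot(\mathcal{L})\le tb(\mathcal{L})+|rot(\mathcal{L})|\le-\chi(\Sigma')$ (you do not actually need to arrange $rot\le 0$; the absolute value already absorbs the sign, and note that the orientation of $L$ is in any case dictated by that of $\Sigma_0$).

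The genuine gap is the self-linking identity itself, which is the entire content of this approach and is not established by the justification you give. The Reeb field of an adapted open book being transverse to the pages does not by itself constrain the characteristic foliation of $\Sigma_0$: the characteristic foliation is governed by where the contact planes are tangent to the page and by the signs of those tangencies, not by transversality of the Reeb flow. The actual argument requires choosing a supporting contact form with $d\alpha$ an area form on the pages, so that every singularity of the characteristic foliation of $\Sigma_0$ is positive; then the formula $sl=-\chi+2(e_--h_-)$ gives sharpness, after one also checks that $L$, which lies in the interior of the page rather than on the binding, can be pushed off to a positively transverse link whose transverse isotopy class and Seifert framing relative to $[\Sigma_0]$ are the intended ones. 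This is a known but non-routine result (the paper cites \cite[Proposition 4.2]{Hayden} as the relevant statement); calling it ``routine relative Chern class manipulations'' understates it, and as written your proof defers exactly the step that carries the mathematical weight. Either cite such a result explicitly or supply the characteristic-foliation argument; otherwise the proof is incomplete.
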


Hence for this class of links the 4-dimensional complexity agrees with the Thurston norm.  One can think about such links as an analogue in general 3-manifolds of Rudolph's {\em strongly quasipositive links}.  This latter class, while initially defined using certain band presentations of surfaces, was shown to be exactly the class of links in the 3-sphere which embed as separating curves on the fiber surface of a torus link (or any fibered link inducing the standard tight contact structure).  See \cite[Proposition 4.2]{Hayden} for a related result.   We conclude with the following conjecture, which sits naturally with the others. 

\begin{conjecture} \label{conj3} A closed contact manifold is tight if and only if every subsurface of the page of each of its supporting open books is Thurston norm minimizing and, moreover, is minimal complexity for its boundary amongst all surfaces in $Y\times [0,1]$. 
\end{conjecture}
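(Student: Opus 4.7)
The plan is to handle the two directions of the biconditional separately: the forward implication is essentially automatic from Conjecture \ref{conj1}, while the reverse implication requires a delicate open book construction which I view as the main obstacle.

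For the forward direction, suppose $(Y,\xi)$ is tight and let $F_{0}$ be a subsurface of the page of some supporting open book. The argument used to deduce Theorem \ref{subsurface} from Theorem \ref{sB} carries over verbatim once Conjecture \ref{conj1} is assumed: the boundary $L_{0}=\partial F_{0}$ is naturally transverse in $\xi$ with $sl_{\xi}(L_{0})=-\chi(F_{0})$, and may be approximated by a Legendrian $\mathcal{L}$ with $tb_{\xi}(\mathcal{L})+rot_{[F_{0}]}(\mathcal{L})=sl_{\xi}(L_{0})$. Thus any $\Sigma\hookrightarrow Y\times[0,1]$ in the relative class $[F_{0}]$ with $\partial\Sigma=L_{0}$ satisfies
\[
-\chi(\Sigma)\ \geq\ tb_{\xi}(\mathcal{L})+|rot_{[F_{0}]}(\mathcal{L})|\ \geq\ -\chi(F_{0}),
\]
which is minimality in $Y\times[0,1]$. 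Thurston norm minimality in $Y$ itself follows as the special case where $\Sigma\subset Y\times\{0\}$.

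For the reverse direction I would argue by contrapositive: assuming $\xi$ is overtwisted, produce some supporting open book with a subsurface of its page that fails minimality in $Y\times[0,1]$. The key input is that in an overtwisted contact structure any topological knot type admits Legendrian representatives with Thurston-Bennequin invariant arbitrarily large; in particular one can fix a Legendrian unknot $\mathcal{L}$ with $tb(\mathcal{L})\geq 1$ and $rot(\mathcal{L})\geq 0$, obtained from the boundary of an overtwisted disk by positive stabilizations. Using the Giroux correspondence together with existing technology for putting Legendrian knots on pages of compatible open books, one should be able to find a supporting open book for $\xi$ in which $\mathcal{L}$ sits as a \emph{separating} curve on a page with contact framing matching the page framing. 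Then $\mathcal{L}$ bounds a subsurface $F$ of the page with $-\chi(F)=sl(\mathcal{L})\geq 1$, whereas the unknot bounds a standard disk in $Y\times[0,1]$ of Euler characteristic $1$; hence $F$ is not of minimal complexity in its relative homology class.

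The principal obstacle will be in executing this construction step. Realizing an arbitrary Legendrian knot as a separating page curve, bounding a subsurface in a prescribed relative homology class and without changing the supported contact structure, is not straightforward: open book stabilizations modify the page and monodromy in coupled ways, and the standard procedures for converting Legendrian knots into page curves typically produce non-separating curves unless one performs further stabilizations or surgeries. A secondary subtlety arises from the $|rot|$ appearing in Conjecture \ref{conj1}: one must either select $\mathcal{L}$ with a definite sign of rotation number, or run the argument for both orientations of the ambient surface in parallel.
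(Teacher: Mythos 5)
First, be aware that the statement you are proving is a \emph{conjecture}; the paper offers no proof of it, only the closing remark that one direction (overtwisted implies some subsurface fails minimality) can be established by combining Eliashberg's classification of overtwisted contact structures with explicit open books for the overtwisted structure on $S^3$ of Hopf invariant zero whose pages do not minimize slice genus. Your forward direction is the same reduction the paper uses to deduce Conjecture \ref{conj2} from Conjecture \ref{conj1}, and it is fine as a conditional statement, but Conjecture \ref{conj1} is itself open (the paper only proves it when $c(\xi)\neq 0$, which is Theorem \ref{subsurface}), so nothing unconditional comes out of it.

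The reverse direction contains a genuine error, not merely a ``delicate construction step.'' If a Legendrian curve $\mathcal{L}$ sits on a page as a separating curve bounding a subsurface $F$ of the page, then $F$ is itself a Seifert surface for $\mathcal{L}$, and the framing $F$ induces on its boundary is precisely the page framing; so if the contact framing agrees with the page framing, then contact framing $=$ Seifert framing and $tb(\mathcal{L})=0$ is forced. Hence a Legendrian unknot with $tb\geq 1$ can never be realized in the way you propose --- the obstruction is framing-theoretic, not a matter of technique. For such page curves the overtwistedness is detected instead by the rotation number, via $sl=tb-rot=-\chi(F)$, i.e.\ $rot=\chi(F)$. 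Relatedly, your parenthetical that large $tb$ is ``obtained from the boundary of an overtwisted disk by positive stabilizations'' is backwards: stabilization of either sign \emph{decreases} $tb$ by one; arbitrarily large $tb$ in overtwisted structures comes from Eliashberg's classification, as the paper notes in its introduction. The route the paper sketches avoids realization questions entirely: take a fibered link in $S^3$ whose fiber genus strictly exceeds its slice genus and whose open book supports the overtwisted structure of Hopf invariant zero; Murasugi-summing that open book onto any supporting open book for $(Y,\xi)$ produces (using Eliashberg's theorem to identify $\xi\#\xi_{ot,0}$ with $\xi$) a supporting open book for $\xi$ containing that fiber as a subsurface of its page, and this subsurface already fails minimality inside a ball times $[0,1]$, hence in $Y\times[0,1]$ in its relative homology class.
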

 
\noindent The surfaces considered by the conjecture should share the same relative homology class as the subsurface in question, just as in Conjecture \ref{conj2}.   Using Eliashberg's classification of overtwisted contact structures, together with the fact that one can find open books inducing the overtwisted contact structure on $S^3$ with Hopf invariant zero whose pages do not minimize slice genus, one can easily establish the ``only if" direction of the conjecture. \\

\noindent{\bf Acknowledgements:}  We thank Tom Mrowka, Danny Ruberman, and John Etnyre for interesting conversations.
 
\section{Proof of Theorems}
In this section we prove the theorems stated in the introduction.     The key tool is a numerical invariant $\tau_\xi(Y,K)\in \Z$, of a knot $K$ in a contact manifold $(Y,\xi)$ defined by the first author in \cite{tbbounds}. Roughly speaking, this invariant measures the Alexander filtration level in the knot Floer homology filtration when the contact invariant first appears. It generalizes the \os-Rasmussen concordance invariant $\tau(K)$ for knots in $S^3$ and satisfies a Bennequin-type inequality analogous to one proved by Plamenevskaya in that setting \cite{Olga2004}.   The strategy will be to combine the Bennequin-bound for $\tau_\xi(Y,K)$ with a relative adjunction inequality for the generalized $\tau$ invariants recently established by the authors in \cite{adjunction}.  Combined, and correctly interpreted, these two inequalities will quickly yield Theorem \ref{sB} for surfaces with connected boundary; the extension to surfaces whose boundaries are multi-component Legendrian links will be deduced from properties of a ``knotification" operation introduced by \ons\ \cite[Section 2.1]{Knots}.  As discussed in the introduction, Theorem \ref{milnor} follows immediately from Theorem \ref{sB}. We then prove Theorem \ref{subsurface} using a cobordism argument.   Along the way, we prove slice-Bennequin inequality for rationally null-homologous knots, Theorem \ref{thm:rational}, and establish a Heegaard Floer analogue of  Mrowka-Rollin's bound, Theorem \ref{MRbound}. \\

We begin in earnest by recalling the definition of $\tau_\xi(Y,K)$. We assume that knots are  null-homologous, unless otherwise specified.   We will not review the basics of knot Floer homology or the contact invariant; for this, we refer  to \cite{tbbounds} for a thorough introduction aimed at the present applications.   Let us recall, though, that the contact invariant $c(\xi)$ of a contact manifold $(Y,\xi)$ resides in the Floer homology of the manifold with its orientation reversed $c(\xi)\in \HFa(-Y)$ \cite[Definition 1.2]{Contact}, a group which can be identified with the Floer cohomology of $Y$ \cite[Proposition 2.5]{HolDiskTwo}.  As such, there is a bilinear hom pairing (we work with $\F=\Z/2\Z$ coefficients throughout)
\[ \HFa(-Y)\otimes \HFa(Y)\rightarrow \F\]
which we denote by $\langle - , - \rangle$.  We have the following invariant
\begin{defn}\cite[Definition 17 and 23]{tbbounds}  Let $K\subset Y$ be a knot and $\xi$ a contact structure on $Y$ with $c(\xi)\ne 0$.  
\[	\tau_\xi(Y,K):=\mathrm{min}\{m\in\Z|\ \exists \alpha\in\mathrm{Im} (H_*\Filt(K,m)\rightarrow \HFa(Y)) \ \mathrm{such \ that \ } \langle c(\xi),\alpha\rangle \ne 0\}. \]
\end{defn}

It should be noted that the Alexander grading on knot Floer homology, and hence the index for the associated filtration $\Filt(K)$, depends on the relative homology class $[S,K]\in H_2(Y,K)$ of a choice of (rational) Seifert surface.  We suppress this dependence throughout.

The motivation for the invariant above is the following bound it provides on the Thurston-Bennequin and rotation numbers \cite[Theorem 2]{tbbounds}:
\begin{equation}\label{eq:taubennequin} tb(\mathcal{K})+|rot_\SSurf(\mathcal{K})|\le 2\tau_\xi(Y,K)-1,\end{equation}
where $\mathcal{K}$ is any Legendrian representative of $K$ in $\xi$. We wish to combine this inequality with a relative adjunction inequality for generalized $\tau$ invariants, recently proved by the authors in \cite{adjunction}.  For this, define: 
\begin{defn} Let $K\subset Y$ be a knot and $\alpha\in \HFa(Y)$ a non-zero class.  
\[	\tau_\alpha(Y,K):=\mathrm{min}\{m\in\Z|\ \alpha\in\mathrm{Im} (H_*\Filt(K,m)\rightarrow \HFa(Y))\}. \]
\end{defn}

The main theorem of \cite[Theorem 1]{adjunction} indicates that  $\tau_\alpha(Y,K)$  constrains the genera of smooth surfaces bounded by $K$ in cobordisms whose map on Floer homology involves the class $\alpha$ in a non-trivial way.  To state it, suppose more generally that we are given two oriented knots $K_0 \subset Y_0$, $K_1 \subset Y_1$, and an oriented   $\SpinC$ cobordism $(W,\spinct)$  between their ambient 3-manifolds.  If, for classes $\alpha\in \HFa(Y_0), \beta\in \HFa(Y_1)$ the induced map on Floer homology satisfies $\widehat{F}_{W,\spinct}(\alpha)=\beta$, then
\begin{equation}\label{eq:adj} \langle c_1(\spinct),[ \Sigma_{S_0,S_1}]\rangle + [ \Sigma_{S_0,S_1}]^2 + 2(\tau_\beta(Y_1,K_1)-\tau_\alpha(Y_0,K_0)) \le 2g(\Sigma),\end{equation}
where $\Sigma$ is any smooth oriented surface in $W$ with oriented boundary $-K_0\sqcup K_1$, and $[ \Sigma_{S_0,S_1}]\in H_2(W)$ is the (absolute) homology class resulting from capping $\Sigma$ with Seifert surfaces $S_0,-S_1$ whose boundary are  $K_0, -K_1$ respectively.   

Finally, we have the following proposition relating the $\tau$ invariant associated to the contact class  to the invariant $\tau_\xi(Y,K)$. 
\begin{prop}\cite[Definition 23 and Proposition 28]{tbbounds}\label{Prop:reverseorient}
Let $c(\xi)\in\HFa(-Y)$ be the contact class. If $c(\xi)\neq 0$ then \[\tau_{c(\xi)}(-Y,K)=-\tau_{\xi}(Y,K).\]
\end{prop}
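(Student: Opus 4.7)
The strategy is to invoke the standard orientation-reversal duality in Heegaard Floer homology, under which $\CFa(-Y)$ is naturally identified with the dual chain complex $\CFa(Y)^*$, and the pairing $\langle -, -\rangle\colon \HFa(-Y) \otimes \HFa(Y) \to \F$ becomes evaluation of functionals. The first task is to track how the knot filtration behaves under this identification. A Seifert surface for $K\subset Y$ also serves as one for $K\subset -Y$, but the induced Alexander grading flips sign, and at the level of subcomplexes this yields
\[\Filt(-Y,K,m) \;=\; \Ann\bigl(\Filt(Y,K,-m-1)\bigr) \;\subset\; \CFa(Y)^* = \CFa(-Y).\]
This identification follows from \ons's foundational setup for knot Floer homology together with the fact that orientation reversal interchanges the roles of the two basepoints.

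With the filtration duality in hand, the remainder of the proof is a formal diagram chase. For each $n\in\Z$, dualize the inclusion $\Filt(Y,K,n)\hookrightarrow \CFa(Y)$ to obtain the short exact sequence
\[0 \to \Filt(-Y,K,-n-1) \to \CFa(-Y) \to \Filt(Y,K,n)^* \to 0.\]
Its long exact sequence in homology shows that $c(\xi)$ lifts to a class in $H_*\Filt(-Y,K,-n-1)$ if and only if its image under the restriction map $\HFa(-Y)\to H_*(\Filt(Y,K,n)^*)$ is zero. Identifying the latter group with the dual of $H_*\Filt(Y,K,n)$ over $\F$, this image vanishes precisely when the functional $\alpha\mapsto \langle c(\xi),\alpha\rangle$ is zero on the image of $H_*\Filt(Y,K,n)$ in $\HFa(Y)$.

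Combining the two equivalences, for every $n\in\Z$ we get that $c(\xi)$ lies in the image of $H_*\Filt(-Y,K,-n-1)\to \HFa(-Y)$ if and only if $\langle c(\xi),\alpha\rangle = 0$ for every $\alpha$ in the image of $H_*\Filt(Y,K,n)\to\HFa(Y)$. The first condition is equivalent to $\tau_{c(\xi)}(-Y,K)\le -n-1$, while the second is equivalent to $\tau_\xi(Y,K)\ge n+1$, i.e.\ $-\tau_\xi(Y,K)\le -n-1$. Since these cut out the same set of integers $n$, we conclude $\tau_{c(\xi)}(-Y,K) = -\tau_\xi(Y,K)$. The main obstacle lies in the first step: extracting the precise filtration shift by $-1$ in the duality requires some care with Heegaard-diagrammatic conventions for orientation reversal, but once this is pinned down the rest of the argument is a direct unpacking of the definitions of $\tau_\xi$ and $\tau_\alpha$ via the long exact sequence.
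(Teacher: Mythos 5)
The paper does not prove this proposition; it is quoted from \cite{tbbounds} (Definition 23 and Proposition 28), where it is established by exactly the duality argument you give. Your proof is correct: the identification $\Filt(-Y,K,m)=\Ann(\Filt(Y,K,-m-1))$ under $\CFa(-Y)\cong\CFa(Y)^*$, the long exact sequence of the dualized short exact sequence, and the equivalence ``$c(\xi)$ lifts to level $-n-1$ on $-Y$ iff $\langle c(\xi),-\rangle$ kills the image of $H_*\Filt(Y,K,n)$'' together pin down $\tau_{c(\xi)}(-Y,K)=-\tau_\xi(Y,K)$, matching the cited source's reasoning.
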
 

\begin{proofof}{\bf Theorem \ref{sB}.} 

Suppose $(Y,\xi)$ is a contact manifold with non-vanishing \os \ contact invariant $c(\xi)\ne 0\in \HFa(-Y)$. We begin by assuming that the surface $\Sigma$ has connected boundary, a knot $K$ which is Legendrian with respect to $\xi$.  We wish to apply the relative adjunction inequality \eqref{eq:adj}  to the 4-manifold $Y\times [0,1]$ to bound the genus of $\Sigma$ in terms of $\tau_\xi(Y,K)$.  Coupled with the Bennequin bound \eqref{eq:taubennequin} provided by $\tau_\xi(Y,K)$, this would prove the theorem for surfaces with connected boundary were it not for a wrinkle in the definitions arising from the fact that $\tau_\xi(Y,K)$ is defined in a dual manner to $\tau_\alpha(Y,K)$.  

To iron this out, we observe that $\partial(Y\times [0,1])=-Y\times \{0\}\sqcup Y\times \{1\}$. Thus we may regard $Y\times [0,1]$ as a cobordism from $-Y\times\{1\}$ to $-Y\times \{0\}$  by ``turning the cobordism around". Regarding our manifold this way, we have that the induced map on Floer homology $\widehat{F}_{Y\times I}:\HFa(-Y)\to\HFa(-Y)$ sends the contact invariant $c(\xi)$ to itself. 

Now we treat $\Sigma\subset Y\times [0,1]$ as a cobordism from $\partial\Sigma\subset -Y\times \{1\}$ to the unknot $U\subset -Y\times\{0\}$. by tubing $\Sigma$ to the other boundary component. We then observe that the unknot has all $\tau$ invariants zero, when defined with respect to the relative homology class of a Seifert surface provided by a disk.  Finally, we note that if we define the Alexander grading for $K$ with respect to the the homology class of $\Sigma$, then capping with a Seifert surface in (negative) this relative class produces a null-homologous closed surface.  Thus the Chern class evaluation and self-intersection number in \eqref{eq:adj} vanish, and the relative adjunction inequality  reduces to \[ 0-2\tau_{c(\xi)}(-Y,K) \le 2g(\Sigma).\] Finally, since \[-\tau_{c(\xi)}(-Y,K)=\tau_{\xi}(Y,K)\] we obtain the desired inequality.

We now turn to the case that the surface $\Sigma$ has multiple boundary components; that is, $\partial \Sigma=L\subset Y$ is a link. To treat links, we appeal to \ons's ``knotification" construction from \cite[Section 2]{Knots}, which associates a knot $\kappa(L)\subset Y\#^{|L|-1} S^1\times S^2$ to an $|L|$-component link $L\subset Y$.  The knot $\kappa(L)$ is defined by attaching bands which join the components of $L$, and which run through the boundary of $4$-dimensional $1$-handles attached to $Y$ with feet centered at the points of $L$ where the bands are attached.  We can think of this  as  attaching  $1$-handles in the category of manifold pairs;  $(\mathbb{D}^1\times \mathbb{D}^3, \mathbb{D}^1\times \mathbb{D}^1)$ attached along the positive boundary of $(Y,L)\times [0,1]$.  From this perspective, it is clear that the knot on the new boundary, $\kappa(L)$, is well-defined up to diffeomorphism: any ambiguity in the construction can be dispatched with by performing handles slides (see Proposition 2.1 of \cite{Knots} for more details).

We can now define $\tau_\xi(L)$ for links $L\subset Y$ in terms of knotification. To do so, we recall that there is a unique tight contact structure $\xi_{std}$ on $\#^{|L|-1}S^1\times S^2$ obtained as the boundary of the Stein domain gotten from the $4$-ball  by attaching Stein $1$-handles.  Attaching Stein $1$-handles to $Y\times [0,1]$ gives rise to a contact structure on $Y\#^{|L|-1} S^1\times S^2$ which is contactomorphic to the contact connected sum $\xi\#\xi_{std}$ (see \cite{Colin,EtnyreHondaConnected} for details on the contact connected sum operation).   We make the following definition:

\begin{defn}  Let $L\subset (Y,\xi)$ be a link in a contact manifold.  Then 
\[ \tau_\xi(Y,L):= \tau_{\xi\#\xi_{std}}(Y\#^{|L|-1} S^1\times S^2, \kappa(L))\]
\end{defn}

There is a K\"unneth theorem for the Floer homology of connected sums \cite[Theorem 1.4]{HolDiskTwo} which, applied to the situation at hand,  states that there is an isomorphism
\[ \HFa(Y\#^{|L|-1}S^1\times S^2,\spinc\#\spinc_0)\cong \HFa(Y,\spinc)\otimes_\F \HFa(\#^{|L|-1}S^1\times S^2,\spinc_0).\]
 
\noindent Under this, the contact invariants satisfy a product formula \cite[Property 4, pg. 20]{tbbounds} \[c(\xi\#\xi_{std})=c(\xi)\otimes c(\xi_{std}),\]  and $c(\xi_{std})\in \HFa(-\#^{|L|-1}S^1\times S^2)$ can easily be shown to be non-trivial; for instance, $\xi_{std}$ is Stein fillable, which implies non-triviality  \cite[Theorem 1.5]{Contact}. Thus, under the assumption that $c(\xi)\in\HFa(-Y)$ is non-trivial, the Bennequin inequality applies: 
\begin{equation}\label{eq:tbboundkappa} tb(\kappa(L))+rot(\kappa(L))\le 2\tau_{\xi\#\xi_{std}}(Y\#^{|L|-1} S^1\times S^2, \kappa(L))-1 := 2\tau_\xi(Y,L)-1.\end{equation}

On the other hand,  a Legendrian representative for $L$ naturally induces a Legendrian representative for $\kappa(L)$ satisfying:
\begin{equation}\label{eq:tbadditivity} tb(\kappa(L))=tb(L)+|L|-1 \ \ \ \  \ \ \ rot(\kappa(L))=rot(L).\end{equation}
This can be seen using the same proof as \cite[Lemma 3.3]{EtnyreHondaConnected}.  

Now, by construction, any surface $\Sigma\subset Y\times [0,1]$ with boundary $L$ induces a surface $\Sigma'=\Sigma\cup \{1$-handles$\}$   in   $Y\times [0,1] \cup\{1$-handles$\}$ bounded by $\kappa(L)$, with 
\begin{equation}\label{eq:chisigma} -\chi(\Sigma')=-\chi(\Sigma)+|L|-1.\end{equation} As before, we view the 4-manifold $X=Y\times [0,1] \cup \{1$-handles$\}$ as a cobordism from $-(Y\#^{|L|-1}S^1\times S^2)$ to $-Y$.  The associated map on Floer homology sends $c(\xi\#\xi_{std})$ to $c(\xi)$, as can be seen by direct computation.  Indeed, it is straightforward to verify that $c(\xi_{std})\in \HFa(-\#^{|L|-1} S^1\times S^2)$ is given by the dual of  $\Theta_{top}\in\HFa(\#^{|L|-1} S^1\times S^2)$, the generator  with highest Maslov grading; see, e.g. \cite[Proof of Proposition 5.19]{adjunction}.  The product formula for the contact invariant of a connected sum therefore yields $c(\xi\#\xi_{std})=c(\xi)\otimes\Theta_{top}^*$. By definition,  the map on Floer homology associated to a 1-handle cobordism takes $\alpha\in \HFa(Y)$ to $\alpha\otimes\Theta_{top}\in \HFa(Y\#^{|L|-1} S^1\times S^2)$, hence the dual map (which is identified with the  map associated to the cobordism turned around) sends $c(\xi\#\xi_{std})$ to $c(\xi)$ as asserted.   It then follows from the relative adjunction inequality that  
\begin{equation} \label{eq:kappaadjunction} -2\tau_{c(\xi\#\xi_{std})}(-(Y\#^{|L|-1}S^1\times S^2),\kappa(L)) -1 \le 2g(\Sigma')-1 = -\chi(\Sigma')  \end{equation}

 Stringing everything together yields  the desired inequality: 
 \[ tb(L)+|L|-1 + rot(L) \underset{\eqref{eq:tbadditivity}}= tb(\kappa(L))+rot(\kappa(L)) \underset{\eqref{eq:tbboundkappa}}\le 2\tau_{\xi\#\xi_{std}}(Y\#^{|L|-1}S^1\times S^2, \kappa(L))-1\]\[\underset{\mathrm{Prop.}\ \ref{Prop:reverseorient}}=-2\tau_{c(\xi\#\xi_{std})}(-(Y\#^{|L|-1}S^1\times S^2),\kappa(L)) -1 \underset{\eqref{eq:kappaadjunction}}\le -\chi(\Sigma') \underset{\eqref{eq:chisigma}}= -\chi(\Sigma)+|L|-1.\]
\end{proofof}

More generally, if we consider rationally null-homologous knots in contact 3-manifolds, we can prove a rational slice-Bennequin inequality.  To state it, recall from \cite{Baker-Etnyre} that Baker and Etnyre define invariants $tb_\Q(\mathcal K)$ and $rot_\Q(\mathcal K)$ associated to a rationally null-homologous Legendrian $\mathcal{K}$ in a contact 3-manifold $(Y,\xi)$. Moreover, they prove a version of the Eliashberg-Bennequin bound in this setting. Specifically, if $(Y,\xi)$ is tight and $K$ is any rationally null-homologous knot, \[tb_\Q(\mathcal K)+|rot_\Q(\mathcal{K})|\le -\frac{1}{q}\chi(F)\] where $\mathcal K$ is any Legendrian representative of $K$, $F$ is a rational Seifert surface for $K$, and $q$ is the order of $K$ in $H_1(Y;\Z)$. 
 
Before stating our rational slice-Bennequin bound, we make a definition. Let $K\subset Y$ be a knot whose homology class $[K]\in H_1(Y)$ has order $q$. A (smooth) \emph{rational slice surface}   for  $K$ consists of a compact oriented surface-with-boundary $\Sigma$,  along with a smooth map $\Sigma\to Y\times [0,1]$ that is an  embedding on the interior of $\Sigma$ and for which the restriction to $\partial \Sigma$ is a $q$-fold covering of $K\times\{1\}$. We let $\Sigma$ denote the singular surface in $Y\times [0,1]$ arising as the image of the defining map. One could, of course, consider the corresponding notion in the locally flat category.

\begin{theorem}\label{thm:rational}
Let $(Y,\xi)$ be a contact 3-manifold with non-trivial Ozsv\'ath-Szab\'o contact class. If $K\subset Y$ is a knot  of order $q$ and $\mathcal K$ is any Legendrian representative, then \[tb_{\Q}(\mathcal K)+rot_{\Q}(\mathcal K) \leq -\frac{1}{q}\chi(\Sigma)\] where $\Sigma\hookrightarrow Y\times[0,1]$ is any smooth rational slice surface for $K$.
\end{theorem}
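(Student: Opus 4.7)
The strategy is to adapt the proof of Theorem \ref{sB} line by line to the rationally null-homologous setting, replacing each integer-valued object with its rational analogue. When $K\subset Y$ has order $q$ in $H_1(Y;\Z)$, the Alexander filtration $\Filt(K)$ on $\HFa(Y)$ associated to a rational Seifert surface is indexed by a coset of $\tfrac{1}{q}\Z$, so the definitions of $\tau_\xi(Y,K)$ and $\tau_\alpha(Y,K)$ carry over verbatim and take rational values. The duality identity $\tau_{c(\xi)}(-Y,K)=-\tau_\xi(Y,K)$ of Proposition \ref{Prop:reverseorient} is purely formal and extends without change.

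The first input I would need is a rational analogue of the $\tau$-Bennequin bound \eqref{eq:taubennequin},
\[ tb_\Q(\mathcal{K})+|rot_\Q(\mathcal{K})|\leq 2\tau_\xi(Y,K)-1, \]
valid for rationally null-homologous Legendrians in the sense of Baker-Etnyre \cite{Baker-Etnyre}. This follows by reproducing the destabilization argument of \cite{tbbounds}, since stabilization of a rational Legendrian changes $tb_\Q$ and $rot_\Q$ by integers in the usual way. The second input is the rational form of the relative adjunction inequality from \cite{adjunction}, which applies directly to a rational slice surface $\Sigma$ whose boundary $q$-fold covers $K$ and which produces a bound involving $-\tfrac{1}{q}\chi(\Sigma)$ on the right-hand side.

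With these two ingredients in hand, the remainder of the argument is structurally identical to the proof of Theorem \ref{sB}. I would turn $Y\times[0,1]$ around as a cobordism from $-Y\times\{1\}$ to $-Y\times\{0\}$, under which the induced map on $\HFa$ fixes $c(\xi)$. Tubing $\Sigma$ to a small disk near the other boundary, and capping off with a rational Seifert surface in the relative class $-[\Sigma]$, produces a rationally null-homologous closed surface; this kills the Chern class and self-intersection contributions in the rational adjunction inequality. What remains collapses to
\[ 2\tau_\xi(Y,K)-1\leq -\tfrac{1}{q}\chi(\Sigma), \]
which combined with the rational Bennequin bound and Proposition \ref{Prop:reverseorient} yields the desired rational slice-Bennequin inequality.

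The main obstacle will be verifying that the rational Bennequin bound and the rational adjunction inequality fit together with consistent normalizations: one must confirm that the factor of $\tfrac{1}{q}$ coming from the covering multiplicity on the Euler characteristic side lines up correctly with the rational $\tau$ invariants, without leaving behind residual covering or branching correction terms after the tubing construction. Once this bookkeeping is dispatched, the chain of inequalities reduces structurally to the integer case handled in Theorem \ref{sB}.
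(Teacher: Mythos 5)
Your proposal follows essentially the same route as the paper: combine a rational Bennequin bound of the form $tb_\Q(\mathcal K)+rot_\Q(\mathcal K)\le 2\tau_\xi(Y,K)-1$ (which the paper cites from Li--Wu rather than re-deriving by destabilization) with a rational relative adjunction inequality for $Y\times[0,1]$ (cited from the authors' companion paper on rational genus bounds), and close the loop with Proposition \ref{Prop:reverseorient}. The normalization bookkeeping you flag is exactly what those two cited results package up, so the argument is correct and matches the paper's.
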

\begin{proof} By Li and Wu \cite[Theorem 1.1]{Li-Wu}, if $\mathcal K$ is a Legendrian representative of $K$ then \[tb_\Q(\mathcal K)+rot_\Q(\mathcal{K})\le 2\tau_\xi(Y,K)-1.\]
The definition of $\tau_\xi(Y,K)$ here is the same as for a null-homologous knot, with the caveat that because $K$ is {rationally} null-homologous the Alexander grading may take {rational} values. Thus $\tau_\xi(Y,K)$ may be a rational number. See \cite{Li-Wu} or \cite{adjunction} for more details.   In \cite{rationalgenusbounds}, the authors prove an analogue of \eqref{eq:adj} which constrains the genera of cobordisms between cables of rationally null-homologous knots. In the case of $Y\times [0,1]$, this inequality constrains the Euler characteristic of rational slice surfaces by any of the invariants $\tau_\alpha(Y,K)$.  In particular, we have 
\[-2\tau_{c(\xi)}(-Y,K)-1\le -\frac{1}{q}\chi(\Sigma).\]
Combining this with $\tau_\xi(Y,K)=-\tau_{c(\xi)}(-Y,K)$ (Proposition \ref{Prop:reverseorient}) and Li and Wu's result yields the stated bound.  
\end{proof} 

\begin{remark} In the case that the ambient 3-manifold has $b_1(Y)>0$,  the Alexander grading $($and hence $\tau_\xi(Y,K))$ and rotation number will depend on the relative homology class of a choice of rational Seifert surface. However, the bound we obtain for $\chi(\Sigma)$ is independent of these choices; see \cite[Proof of Theorem 1]{adjunction} for more details.
\end{remark}

The techniques at hand  yield a Heegaard Floer analogue of  the Mrowka-Rollin bound. 
 
\begin{theorem}\label{MRbound} Suppose $(W,\spinct)$ is a $\SpinC$ cobordism satisfying $\widehat{F}_{W,\spinct}(c(\xi))\ne 0$, where $c(\xi)\in \HFa(-Y)$ is the contact invariant of a contact structure $\xi$ on $Y$.  Then for a null-homologous Legendrian knot $\mathcal{K}\subset Y$, and any smoothly embedded  surface $\Sigma\subset W$ with boundary $\mathcal{K}$, we have
\[  tb(\mathcal{K},[\Sigma])+|r(\mathcal{K},[\Sigma],\spinct,h)|\le -\chi(\Sigma),\]
where $tb(\mathcal{K},[\Sigma])$ and $r(\mathcal{K},[\Sigma],\spinct,h)$ denote Thurston-Bennequin and rotation numbers defined with respect to the relative homology class of $\Sigma$ and an identification $h$ between $\spinct|_Y$ and $\spinc_\xi$  (see \cite{MrowkaRollin}).
\end{theorem}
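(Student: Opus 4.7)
The plan is to adapt the argument from the proof of Theorem \ref{sB}, replacing the product cobordism $Y\times[0,1]$ with the given $\SpinC$ cobordism $(W,\spinct)$. I would view $W$ as a cobordism from $-Y$ to its remaining boundary $Y_1$, so that the induced map $\widehat{F}_{W,\spinct}\colon\HFa(-Y)\to\HFa(Y_1)$ carries the contact invariant $c(\xi)$ to a non-zero class $\beta\in\HFa(Y_1)$ by hypothesis. If $\partial W=Y$, I would first puncture $W$ so that $Y_1=S^3$; this does not affect the assumed non-vanishing of $\widehat{F}_{W,\spinct}(c(\xi))$.

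Next I would convert $\Sigma$ into a cobordism between knots so that the relative adjunction inequality \eqref{eq:adj} applies. Placing a small unknot $U\subset Y_1$ bounding a disk $D$ and tubing $\Sigma$ to $D$ by a one-handle produces a surface $\widehat\Sigma\subset W$ with boundary $\mathcal{K}\sqcup U$, satisfying $2g(\widehat\Sigma)=-\chi(\Sigma)+1$. Since $U$ bounds a disk, $\tau_\beta(Y_1,U)=0$. Applying \eqref{eq:adj} with $\alpha=c(\xi)$ and invoking Proposition~\ref{Prop:reverseorient} to rewrite $\tau_{c(\xi)}(-Y,K)=-\tau_\xi(Y,K)$ gives
\[
\langle c_1(\spinct),[\widehat\Sigma_{S,D}]\rangle+[\widehat\Sigma_{S,D}]^2+2\tau_\xi(Y,K)\le -\chi(\Sigma)+1.
\]
Combining with the Bennequin-type bound \eqref{eq:taubennequin} for $\tau_\xi(Y,K)$ then yields
\[
tb(\mathcal{K})+|rot_{[\Sigma]}(\mathcal{K})|+\langle c_1(\spinct),[\widehat\Sigma_{S,D}]\rangle+[\widehat\Sigma_{S,D}]^2\le -\chi(\Sigma).
\]

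The step I expect to require the most care is matching the left-hand side above with the Mrowka-Rollin expression $tb(\mathcal{K},[\Sigma])+|r(\mathcal{K},[\Sigma],\spinct,h)|$. The identification $h$ between $\spinct|_Y$ and $\spinc_\xi$ pins down precisely the trivialization along $\mathcal{K}$ against which the relative first Chern number is measured, and a direct computation following the conventions of \cite{MrowkaRollin} should show that $\langle c_1(\spinct),[\widehat\Sigma_{S,D}]\rangle+[\widehat\Sigma_{S,D}]^2$ is exactly what their definitions absorb into the $[\Sigma]$-dependent Thurston-Bennequin and rotation numbers. This translation is notational rather than conceptual, but it is the only step in the argument that is not a direct transcription of the proof of Theorem \ref{sB}.
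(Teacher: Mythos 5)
Your proposal is correct and follows essentially the same route as the paper: apply the relative adjunction inequality to $W$ viewed as a cobordism out of $-Y$ with $\Sigma$ tubed to an unknot in the outgoing end, convert $\tau_{c(\xi)}(-Y,K)$ to $\tau_\xi(Y,K)$ via Proposition \ref{Prop:reverseorient}, invoke \eqref{eq:taubennequin}, and absorb the terms $\langle c_1(\spinct),[\Sigma_S]\rangle+[\Sigma_S]^2$ into the $[\Sigma]$-dependence of the Mrowka--Rollin invariants using $tb(\mathcal{K},[\Sigma])-tb(\mathcal{K},[S])=[\Sigma-S]^2$ and $r(\mathcal{K},[\Sigma],\spinct,h)-r(\mathcal{K},[S],\spinct,h)=\langle c_1(\spinct),[\Sigma-S]\rangle$, exactly as the paper does. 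The only nit is that the rotation number in your second display should be $rot_S(\mathcal{K})=r(\mathcal{K},[S],\spinct,h)$, the Seifert-surface-based quantity appearing in \eqref{eq:taubennequin}, rather than $rot_{[\Sigma]}$, since passing to the $[\Sigma]$-based invariants is precisely what the remaining correction terms accomplish.
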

\noindent It is worth pointing out that in the context of null-homologous knots, the rotation number above is independent of $h$.

\begin{proof}  The assumption that $\widehat{F}_{W,\spinct}(c(\xi))\ne 0$ allows us to employ the relative adjunction inequality as before, with $-Y$ viewed as the incoming boundary of $W$:
\[   \langle c_1(\spinct),[ \Sigma_{S}]\rangle + [ \Sigma_{S}]^2 + 2(0-\tau_{c(\xi)}(-Y,K)) \le 2g(\Sigma).\]
Here, $\Sigma_{S}$ is the  homology class in $H_2(W)$ of $\Sigma\cup -S$, where $S$ is a Seifert surface for $K$.  The zero term is added to emphasize the direction of the cobordism, and can be interpreted as the $\tau$ invariant of an unknot in the outgoing boundary of $W$, tubed to $\Sigma$.  Using the equality $-\tau_{c(\xi)}(-Y,K)=\tau_{\xi}(Y,K)$ as before, together with the Bennequin bound  \eqref{eq:taubennequin} for $\tau_{\xi}$, we have: 
\[  \langle c_1(\spinct),[ \Sigma_{S}]\rangle + [ \Sigma_{S}]^2 + tb(\mathcal{K})+ rot_S(\mathcal{K}) \le \langle c_1(\spinct),[ \Sigma_{S}]\rangle + [ \Sigma_{S}]^2 + 2\tau_{\xi}(Y,K))-1 \le 2g(\Sigma)-1,\]
where $tb(\mathcal{K})$ and $rot_S(\mathcal{K})$ denote the Thurston-Bennequin and rotation numbers, as typically defined, which  agree in this case with Mrowka and Rollin's definition of $tb(\mathcal{K},[S])$ and $r(\mathcal{K},[S],\spinct,h)$.   By their definition, these latter invariants depend on the relative homology class in the following manner:
\[  tb(\mathcal{K},[\Sigma])- tb(\mathcal{K},[S]) = [\Sigma-S]^2, \ \ \ \ r(\mathcal{K},[\Sigma],\spinct,h)- r(\mathcal{K},[S],\spinct,h) = \langle c_1(\spinct),[ \Sigma-S]\rangle \]
The result follows.
\end{proof}

We should remark that the result above is both more and less general than Mrowka and Rollin's bound.  Theirs applies to arbitrary Legendrians, while our proof requires them  to be null-homologous (though the technique readily extends to rationally null-homologous knots, as in Theorem \ref{thm:rational}).  On the other hand, our result applies to 4-manifolds with multiple boundary components, and is stated in terms of non-vanishing of the cobordism-induced map on the ``hat" contact invariant.  Assuming a natural isomorphism between monopole Floer homology and Heegaard Floer homology with respect to cobordism maps, the Mrowka-Rollin theorem applies to cobordisms from $-Y$ to $-S^3$ mapping the ``plus" contact invariant $c^+(\xi)$ non-trivially, a stronger Floer theoretic assumption than ours, placed on a more limited collection of 4-manifolds.\\

We conclude with the proof of Theorem \ref{subsurface}.

\begin{proofof}{\bf Theorem \ref{subsurface}.}

Let $F$ be the page of the open book, and $S\subset F$ be the interior of the subsurface cut off from $\partial F$ by $L$. Then $\cob$ is a cobordism from $L$ to $\partial F$, smoothly embedded in $Y$.  Pick a Morse function  $f:\cob\rightarrow[0,1]$ which takes the value $0$ on $L$ and $1$ on $\partial F$, and use this function to produce a proper embedding of $\cob$ into $Y\times [0,1]$.  We first treat the simpler case that  $L$ is connected.  We can assume without loss of generality that $\partial F$ is also connected since, if it were not, we could plumb positive Hopf bands to the page until its boundary is connected without changing the fact that $S$ is a subsurface or that the contact invariant is non-trivial.

Assuming then that $\partial F$ is a knot,  Theorem $5$ of \cite{tbbounds} shows that $\tau_\xi(\partial F)=g(F)$, where  the contact structure $\xi$ used in the definition is that induced by the open book decomposition of $Y$ with page $F$. Here we are using the assumption that $c(\xi)\ne 0$ in the definition of $\tau_\xi$.  

Viewing $Y\times[0,1]$ as a cobordism from $-(Y\times\{1\})$ to $-(Y\times\{0\})$, and applying the relative adjunction inequality \eqref{eq:adj} yields:
\[ 2(\tau_{c(\xi)}(-Y,L)-\tau_{c(\xi)}(-Y,\partial F)) \le 2g(\cob),\]
which, using Proposition \ref{Prop:reverseorient},  implies
\[2(\tau_\xi(\partial F)-\tau_\xi(L)) \le 2g(\cob).\]
\noindent Rearranging, and using the fact that $\tau_\xi(\partial F)=g(F)$, we obtain:
\begin{equation} \label{1} 2g(S)= 2(g(F)-g(\cob)) \le 2\tau_\xi(L).  \end{equation}

\noindent But we could also apply the  relative adjunction inequality to any surface $\Sigma\subset Y\times[0,1]$ with  $\partial\Sigma=L\subset  Y\times\{1\}$  to produce the following inequality:
\begin{equation} \label{2} 2\tau_\xi(L) \le 2g(\Sigma). \end{equation}

\noindent Combining   Inequalities \eqref{1} and \eqref{2}, it follows that $S$ minimizes genus amongst all  surfaces in $Y\times [0,1]$ with boundary $L$.

We now turn to the case that $L=\partial S$ has $n>0$ components. In this case, we again plumb positive Hopf bands to $F$, but this time to ensure that the resulting surface  has $n$ boundary components.  As before, we can do this without changing the contact structure associated to the resulting open book, or the fact that $S$ embeds as a subsurface of the new page, which we will henceforth denote by $F$.  As before, we push  $\cob$ into $Y\times[0,1]$ to view it as a cobordism between $L\subset Y\times\{0\}$ and $\partial F\subset Y\times\{1\}$.  Now find an embedding of $2n-2$ arcs
\[\phi: \sqcup_{i=1}^{2n-2} [0,1]  \hookrightarrow \cob\subset Y\times [0,1] \]
so that post-composition with the projection $\pi :Y\times [0,1] \rightarrow [0,1]$ restricts to the identity on each arc, and so that each component but two of $L$ and $\partial F$ contain exactly two endpoints of (necessarily distinct) arcs. Hence there are  two components of $L$ and $\partial F$ which each contain exactly one endpoint.  See Figure \ref{fig:surgery} for a schematic.  

\begin{figure}
      \def\svgwidth{0.5\textwidth}
\begingroup%
  \makeatletter%
  \providecommand\color[2][]{%
    \errmessage{(Inkscape) Color is used for the text in Inkscape, but the package 'color.sty' is not loaded}%
    \renewcommand\color[2][]{}%
  }%
  \providecommand\transparent[1]{%
    \errmessage{(Inkscape) Transparency is used (non-zero) for the text in Inkscape, but the package 'transparent.sty' is not loaded}%
    \renewcommand\transparent[1]{}%
  }%
  \providecommand\rotatebox[2]{#2}%
  \newcommand*\fsize{\dimexpr\f@size pt\relax}%
  \newcommand*\lineheight[1]{\fontsize{\fsize}{#1\fsize}\selectfont}%
  \ifx\svgwidth\undefined%
    \setlength{\unitlength}{899.80933963bp}%
    \ifx\svgscale\undefined%
      \relax%
    \else%
      \setlength{\unitlength}{\unitlength * \real{\svgscale}}%
    \fi%
  \else%
    \setlength{\unitlength}{\svgwidth}%
  \fi%
  \global\let\svgwidth\undefined%
  \global\let\svgscale\undefined%
  \makeatother%
  \begin{picture}(1,0.50473474)%
    \lineheight{1}%
    \setlength\tabcolsep{0pt}%
    \put(0,0){\includegraphics[width=\unitlength,page=1]{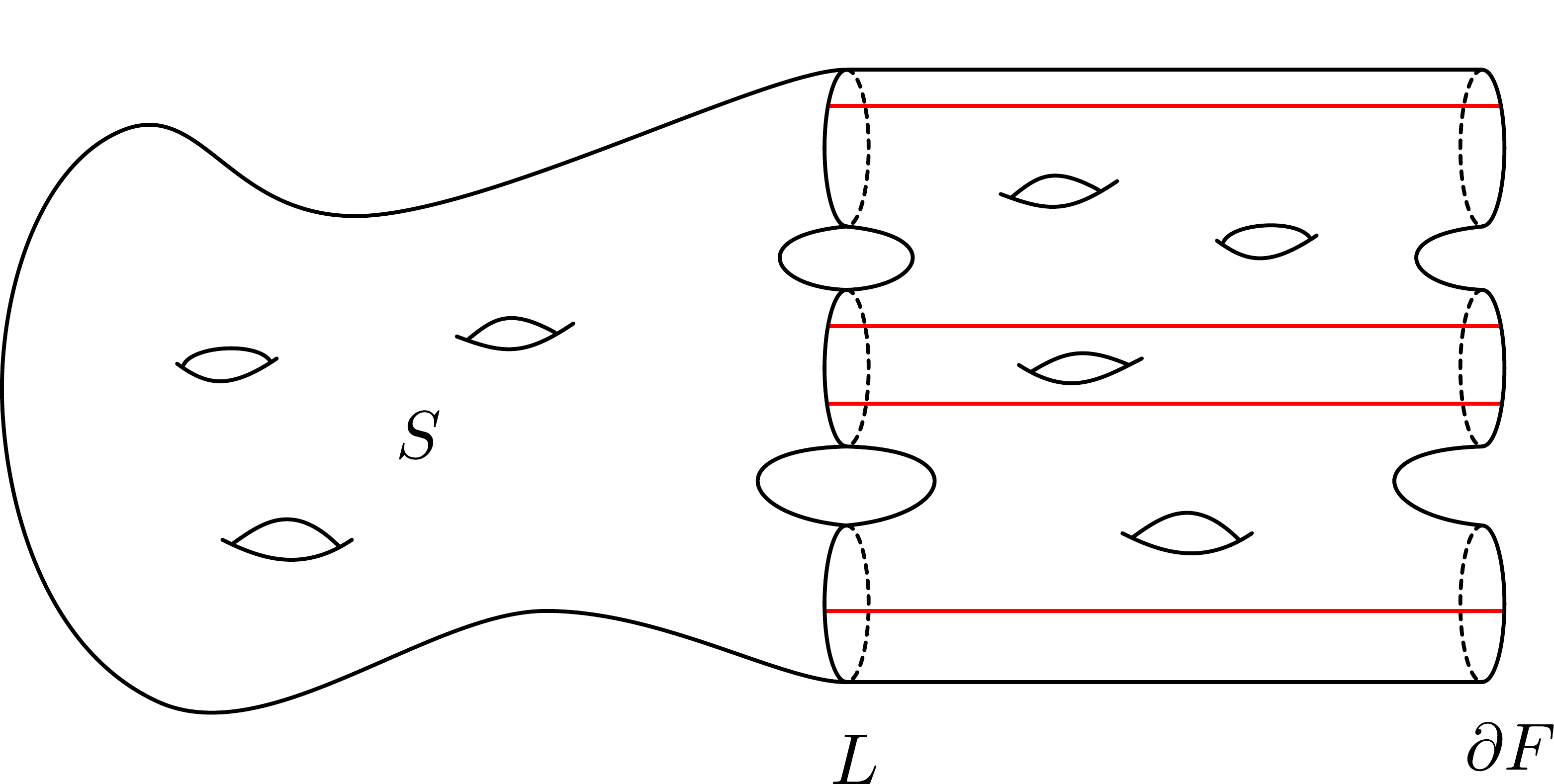}}%
    \put(0.67980517,0.4705502){\color[rgb]{0,0,0}\makebox(0,0)[lt]{\lineheight{1.25}\smash{\begin{tabular}[t]{l}$F\setminus S$\end{tabular}}}}%
  \end{picture}%
\endgroup%

 \caption{ \label{fig:surgery}  }
\end{figure}

Pairing the arcs in such a way that no pair has endpoints on the same component of $L$ or $\partial F$, we perform $n-1$ surgeries along  neighborhoods of the pairs;  that is, we remove neighborhoods of pairs of arcs, diffeomorphic to $B^3\times [0,1]\sqcup B^3\times [0,1] $, and replace them with $S^2\times [0,1]\times [0,1]$.  The resulting $4$-manifold is diffeomorphic to $(Y\#^{n-1}S^1\times S^2)\times [0,1]$.  In addition, we surger $\cob$ along the pairs of arcs, replacing the neighborhoods of the arcs with $\{p\}\times [0,1]\times [0,1]$ where $p\in S^2$.  This has no effect on the Euler characteristic of $\cob$, but produces a new surface, which we denote $\cobprime$, which is smooth and properly embedded in  $(Y\#^{n-1}S^1\times S^2)\times [0,1]$, and whose boundary components are the knotifications $\kappa(L)$ and $\kappa(\partial F)$ of $L$ and $\partial F$, respectively.   

We now observe that plumbing copies of the annulus open book with identity monodromy to $F$ in such a way as to pair up its boundary components yields an open book $\widehat{F}$ for $Y\#^{n-1}S^1\times S^2$ with  binding isotopic to $\kappa(\partial F)$ and with Euler characteristic $\chi(\widehat{F})=\chi(F)-n+1$.  Moreover, the contact structure on $Y\#^{n-1}S^1\times S^2$ induced by $\widehat{F}$ is isotopic to the contact connected sum of  $\xi$ with the unique tight contact structure $\xi_{std}$ on $\#^{n-1}S^1\times S^2$.  The resulting contact invariant is given by $c(\xi)\otimes c(\xi_{std})\in \HFa(-Y)\otimes\HFa(-\#^{n-1}S^1\times S^2)\cong \HFa(-(Y\#^{n-1}S^1\times S^2)),$ and is therefore non-zero.    At this point, we can  appeal as above to the genus minimizing property of the page of an open book with non-trivial contact invariant, to show that
\[ \tau_{\xi\#\xi_{std}}(\partial \widehat{F})=g(\widehat{F}),\]
which we express in terms of Euler characteristic as
\begin{equation} \label{3}  2\tau_{\xi\#\xi_{std}}(\partial \widehat{F}) = -\chi(\widehat{F}) + 1 = -\chi(F)+n-1+1=-\chi(F)+n  \end{equation}

Finally, we apply the relative adjunction inequality to the cobordism $\cobprime$ between $\kappa(L)$ and $\kappa(\partial F)$:
\begin{equation} \label{4}  2\tau_{\xi\#\xi_{std}}(\kappa(\partial F))-2\tau_{\xi\#\xi_{std}}(\kappa(L)) \le 2g(\cobprime) = -\chi(\cobprime)=-\chi(\cob)\end{equation}

Combining \eqref{3} and \eqref{4}, we obtain
\[   -\chi(S)+n= -\chi(F)+n + \chi(F\setminus S)  \le 2\tau_{\xi\#\xi_{std}}(\kappa(L))  \]

\noindent But $-\chi(S)+n$ is equal to twice the genus of the surface $\widehat{S}$ obtained by attaching bands to $L$ along $\partial S$ inside the 1-handles used in the knotification.  Applying the relative adjunction inequality to any surface $\Sigma$ with boundary $\kappa(L)$ inside the $4$-manifold $Y\times[0,1] \cup \{1$-handles$\}$, we have:
\[ -\chi(S)+n = 2g(\widehat{S})\le 2\tau_{\xi\#\xi_{std}}(\kappa(L)) \le 2g(\Sigma) \]

Thus, any surface $\Sigma$ bounded by $\kappa(L)$ in $Y\times[0,1] \cup \{1$-handles$\}$ satisfies $-\chi(S)\le 2g(\Sigma)-n$.  It follows at once that if $G\subset Y\times [0,1]$ were a surface bounded by $L$ with $-\chi(G)<-\chi(S)$, then attaching bands to $G$ would result in a surface violating $-\chi(S)\le 2g(\Sigma)-n$.  Thus $S$ maximizes Euler characteristic amongst all surfaces in $Y\times[0,1]$ sharing its boundary.
\end{proofof}
\begin{remark} One can provide an alternative proof of Theorem \ref{subsurface} by showing that the boundary of subsurfaces of the page of an open book can be perturbed to transverse links for which the Bennequin bound is sharp i.e. for which $sl_S(\partial S)= -\chi(S)$.  Combined  with the slice-Bennequin bound established by Theorem \ref{sB} for contact structures with non-trivial invariant, this establishes Theorem \ref{subsurface}.  We opt for the cobordism argument given above, as  the surgery technique used to deal with multiple boundary components should prove useful in a variety of contexts involving the $\tau$ invariants for links defined via knotification. 
\end{remark}
\bibliographystyle{plain}
\bibliography{4DTight}

\end{document}